\theoremstyle{plain}
 \newtheorem{theorem}{Theorem}[section]
 \newtheorem{lemma}[theorem]{Lemma}
  \newtheorem{proposition}[theorem]{Proposition}
 \newtheorem{corollary}[theorem]{Corollary}
 \newtheorem{failed-conjecture}[theorem]{Failed Conjecture}
\theoremstyle{definition}
  \newtheorem{definition}[theorem]{Definition}
  \newtheorem{example}[theorem]{Example}
  \newtheorem{counter-example}[theorem]{Counter Example}  
\theoremstyle{remark}
  \newtheorem{remark}[theorem]{Remark}
\newcommand{\dflag}{d^{\, \sigma}}
\newcommand{\dpflag}{d^{\, \pi}}
\begin{document}

\date{}
\title{Relations on Generalized Degree Sequences}
\author{Caroline J.\ Klivans}
\address{Departments of Mathematics and Computer Science,
The University of Chicago,
Chicago, IL 60637}
\author{Kathryn L.\ Nyman}
\address{Department of Mathematics and Statistics,
Loyola University Chicago,
Chicago, IL 60626}
\author{Bridget E.\ Tenner}
\address{Department of Mathematical Sciences,
DePaul University,
Chicago, IL 60614}

\subjclass[2000]{Primary 05C07; Secondary 05E99}
\keywords{degree sequence, simplicial poset, $f$-vector, simple polytope}

\begin{abstract}
 We study degree sequences for simplicial posets and 
 polyhedral complexes, generalizing
 the well-studied graphical degree sequences.  Here we extend the more
 common generalization of vertex-to-facet degree sequences by
 considering arbitrary face-to-flag degree sequences.  In particular,
 these may be viewed as natural refinements of the flag $f$-vector of
 the poset.  We investigate properties and relations of these
 generalized degree sequences, proving linear relations between flag
 degree sequences in terms of the composition of rank jumps of the
 flag.  As a corollary, we recover an $f$-vector inequality on
 simplicial posets first shown by Stanley.
 \end{abstract}

\maketitle

\section{Introduction}

Degree sequences of graphs, which record how many edges contain each
vertex, have been studied extensively, and enjoy a rich literature.  One
popular chapter of this literature is the characterization of when an integer
sequence can be a degree sequence; for example, see \cite{Hoogeveen}.

In higher dimensions, the notion of degree sequence for simplicial
complexes has received considerably less attention.  For example, although
the behavior of the vertex-to-facet degree sequence for pure simplicial
complexes has been studied in \cite{Duval, KR, Murthy}, little is known
about the intrinsic properties or characterizations for this, possibly
most natural, extension of graphical degree sequences.

In this paper, we formulate a higher dimensional and more general
analogue of graphical degree sequences, and we study the nature of these
sequences for simplicial posets and for more general polyhedral complexes.
In particular, we define the face-to-flag degree sequence, recording
how many flags with prescribed rank jumps contain each face of a given
rank.  
More precisely, let $\mathcal{P}$ be a pure, rank-$k$ simplicial poset,
and $\sigma = (\sigma_1, \ldots, \sigma_m)$ a composition of $k$.  The
\emph{face-to-flag degree sequence} $d^{\, \sigma}(\mathcal{P})$ is a
sequence indexed by the faces $F_i$ of rank $\sigma_1$, with the
corresponding entry in $d^{\, \sigma}(\mathcal{P})$ counting the number of
flags
\begin{equation*}
 \{F_i \subset X_2 \subset \cdots \subset X_m : {\rm rk}(X_j) =
 \sigma_1 + \sigma_2 + \cdots + \sigma_j \},
\end{equation*}
\noindent which are the flags containing $F_i$ with rank jumps $\sigma_1,
\ldots, \sigma_m$.
We give linear relations between such sequences, proving a majorization
result in the case of simplicial posets, in terms of the relative sizes of the rank jumps of the flags.
Namely, for
$\pi = (\pi_1, \ldots, \pi_m)$ a permutation of $\sigma$, if
$\pi_1 \geq \sigma_1$, then
$$d^{\, \sigma} \unrhd d^{\, \pi}.$$
Furthermore, we obtain a weak majorization result for face-to-flag degree sequences analogously defined for complexes whose maximal faces are simple polytopes.

Our majorization result in the simplicial case  
yields results on $f$-vectors and flag $f$-vectors of pure simplicial posets because these sequences naturally refine the
flag $f$-vector of the poset.   In particular, we recover a result of Stanley: for a rank-$k$
simplicial poset, the $f$-vector satisfies the inequality $f_i \leq
f_{k-i}$, when $i \le k-i$.

In Section~\ref{section:deg seq}, we review graphical degree sequences
as a basis and motivation for studying generalized degree sequences,
also recalling the definition of vertex-to-facet degree sequences and
a few preliminary results.  In Section~\ref{section:gen deg seq}, we
introduce the face-to-flag degree sequences, and prove our main result
(Theorem \ref{thm:facetoflag}), giving a total ordering (via
majorization) of the face-to-flag degree sequences of a simplicial
poset.  As a corollary, we recover the aformentioned result of Stanley.
 In Section~\ref{section:non-simp deg seq}
we consider non-simplicial posets.  First, Theorem
\ref{thm:facetoflag} is extended to the setting of complexes all of
whose maximal faces are simple polytopes.  Finally, we give an
elementary example showing that these results may not be extended to
arbitrary polyhedral complexes.

\section{Degree sequences, motivation and preliminaries}\label{section:deg seq}

\subsection{Graphical degree sequences}

\begin{definition}
For a simple graph $G = (V,E)$ with $|V| = n$,  the \emph{graphical degree
sequence} of $G$ is the sequence
$$d(G) = (d_1, d_2, \ldots, d_n),$$
where
$$d_i = |\left\{j : \{i,j\} \in E \right\}|,$$
and the vertices are labeled so that $d_1 \ge d_2 \ge \ldots \ge d_n$.
\end{definition}

Majorization (or dominance) order is often the natural choice for
comparing graphical degree sequences, as in~\cite[Chapter 7]{Marshall}.

\begin{definition}
Given two sequences of nonnegative integers $a = (a_1, a_2, \ldots, a_n)$ and
$b = (b_1, b_2, \ldots, b_m)$ having the same sum,  $a$ \emph{majorizes} $b$,  written $a \unrhd b$, if the
following system of inequalities hold:
$$ a_1 \geq b_1 $$
$$ a_1 + a_2 \geq b_1 + b_2 $$
$$ \vdots $$
$$ a_1 + a_2 + \cdots + a_{n-1} \geq b_1 + b_2 + \cdots + b_{n-1}.$$
\noindent If one weakens the sum requirement to the inequality $\sum_i a_i
\geq \sum_i b_i $, then $a$ is said to \emph{weakly majorize} $b$, written $a \succeq b$.  
Majorization forms a partial ordering on integer sequences. 
\end{definition}

The \emph{conjugate} of a sequence $d = (d_1, \ldots, d_n)$ of nonnegative integers is the sequence
$d^T$ given by $$d_i^T = |\{j : d_j \geq i \}|$$ for $i \ge 1$.  This agrees with the
transpose when considering $d$ as an integer partition.  An elementary relation on graphical degree sequences compares a sequence and its conjugate.

\begin{proposition}[{\cite[Chapter 7, Section D]{Marshall}}]
\label{prop:conjugate}
The degree sequence of any simple graph $G$ is majorized by its conjugate:
$$\big(d(G)\big)^T \unrhd d(G).$$
\end{proposition}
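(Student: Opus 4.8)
The plan is to establish the two ingredients of majorization separately: equality of the total sums, and the chain of partial-sum inequalities. Equality of sums is immediate, since $d(G)$ and its conjugate $\big(d(G)\big)^T$ are conjugate integer partitions and hence have the same sum (both equal $2|E|$). The substance lies in showing that for every $\ell$ with $1 \le \ell \le n$,
\begin{equation*}
\sum_{i=1}^{\ell} d_i^T \ \ge\ \sum_{i=1}^{\ell} d_i.
\end{equation*}

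First I would rewrite the left-hand side in a form amenable to edge counting. Interchanging the order of summation in the definition $d_i^T = |\{j : d_j \ge i\}|$ gives
\begin{equation*}
\sum_{i=1}^{\ell} d_i^T \ =\ \sum_{j=1}^{n} \min(d_j, \ell),
\end{equation*}
since for a fixed vertex $j$ the number of indices $i \le \ell$ with $d_j \ge i$ is exactly $\min(d_j, \ell)$. Meanwhile, letting $S$ denote the set of the $\ell$ vertices of largest degree (the first $\ell$ in the sorted order), the right-hand side counts edge-endpoints incident to $S$, so that $\sum_{i=1}^{\ell} d_i = 2\,e(S) + e(S, \bar{S})$, where $e(S)$ is the number of edges with both endpoints in $S$ and $e(S, \bar{S})$ the number of edges joining $S$ to its complement $\bar{S}$.

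The key step is then to bound $\sum_{j} \min(d_j, \ell)$ below by these two edge counts, and this is exactly where the simplicity of $G$ enters. For $j \in S$, the number of neighbors of $j$ inside $S$ is at most $\ell - 1$ and at most $d_j$, hence at most $\min(d_j, \ell)$; summing over $S$ yields $\sum_{j \in S} \min(d_j, \ell) \ge 2\,e(S)$. For $j \in \bar{S}$, the number of neighbors of $j$ inside $S$ is at most $\ell$ and at most $d_j$, again at most $\min(d_j, \ell)$; summing over $\bar{S}$ yields $\sum_{j \in \bar{S}} \min(d_j, \ell) \ge e(S, \bar{S})$. Adding the two bounds gives $\sum_{j=1}^{n} \min(d_j, \ell) \ge 2\,e(S) + e(S, \bar{S})$, which is precisely the desired partial-sum inequality.

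The main obstacle is this middle step: recognizing that the cap at $\ell$ appearing in the conjugate is exactly the bound imposed by simplicity, namely that a vertex can send at most one edge to each of the $\ell$ vertices of $S$, so that the capped degrees $\min(d_j, \ell)$ dominate the local edge contributions. Once that interpretation is in place the inequality becomes a direct count; the sorting of degrees serves only to identify $S$ as the top-$\ell$ set and is otherwise inessential. I would then combine the partial-sum inequalities with the equality of total sums to conclude $\big(d(G)\big)^T \unrhd d(G)$.
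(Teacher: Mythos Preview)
Your argument is correct. The paper does not actually prove this proposition; it is quoted without proof from Marshall and Olkin, so there is no in-paper argument to compare against. For what it is worth, your proof is essentially the classical one: the identity $\sum_{i\le \ell} d_i^T = \sum_j \min(d_j,\ell)$ together with the edge-count $\sum_{i\le \ell} d_i = 2e(S)+e(S,\bar S)$ and the simplicity bound on neighbors in $S$ is exactly how this inequality is usually established (and is the ``easy'' half of the Erd\H{o}s--Gallai characterization). One cosmetic point: when you write ``at most $\ell-1$ and at most $d_j$, hence at most $\min(d_j,\ell)$,'' you are implicitly using $\ell-1<\ell$; it would read more smoothly to note the bound is $\min(d_j,\ell-1)\le \min(d_j,\ell)$, but this does not affect correctness.
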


The problem of characterizing graphical degree sequences, that is, deciding which
sequences arise as the degree sequence of some graph, is well understood.  For
example,~\cite{Hoogeveen} lists seven criteria, all of which are linear relations on
the degree sequence or between the degree sequence and its conjugate.

\subsection{Vertex-to-facet degree sequences}\ 
Moving to higher dimensions, we first consider \emph{pure, rank-$k$}
simplicial complexes; that is, complexes in which every maximal face is
of rank $k$, where we define the rank of a face to be one more than its dimension.

For a pure, rank-$k$ simplicial complex, the ``vertex-to-facet'' degree
sequence, counting the number of facets containing each vertex, has
received the most attention as a generalization of graphical degree
sequences.  For example, these sequences are treated in~\cite{Duval}, \cite{KR}, and \cite{Murthy},
where they arise in connection with Laplacian eigenvalues, plethysms,
and zonotopes respectively.

Little is known about the intrinsic properties of such integer
sequences.  In general, the vertex-to-facet sequence does not share
many of the nice properties of the graphical case.  For example,
Proposition~\ref{prop:conjugate} no longer holds.

\begin{example}
Let $\Delta$ be the complete, rank-$3$ complex on five vertices.  Then the
vertex-to-facet degree sequence is $(6,6,6,6,6)$, and $(6,6,6,6,6)^T =
(5,5,5,5,5,5)$.  Proposition~\ref{prop:conjugate} does not hold here,
since the majorization occurs in the wrong direction: $(6,6,6,6,6) \unrhd
(5,5,5,5,5,5)$.
\end{example}

To match the notation for generalized degree sequences introduced in
the upcoming Definition~\ref{defn:face to flag}, let $d^{\, (\sigma_1,
  \sigma_2)}(\Delta)$ be the sequence counting, for each rank-$\sigma_1$ face in a
simplicial complex $\Delta$, the number of rank-$(\sigma_1+\sigma_2)$ faces
containing that face.  Duval, Reiner, and Dong consider
face-to-facet sequences, and give the following relation for a rank-$k$ complex.

\begin{proposition}[{\cite[Proposition 8.4]{Duval}}] \label{drd}
For any pure, rank-$k$ simplicial complex $\Delta$,
\begin{equation*}
\left(d^{\, (i,k-i)}(\Delta)\right)^T \unrhd d^{\, (k-i,i)} (\Delta).
\end{equation*}
\end{proposition}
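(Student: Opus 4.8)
The plan is to translate the majorization into a family of partial-sum inequalities and then prove each one by a double count. Write $\deg_i(F)$ for the number of facets of $\Delta$ containing a rank-$i$ face $F$, so that $d^{\,(i,k-i)}(\Delta)$ is the decreasing rearrangement of $(\deg_i(F))_F$ and $d^{\,(k-i,i)}(\Delta)$ that of $(\deg_{k-i}(G))_G$. Recall that for a partition the $t$-th partial sum of the conjugate equals $\sum_m \min(\mu_m,t)$; hence the $t$-th partial sum of $\left(d^{\,(i,k-i)}(\Delta)\right)^T$ is exactly $\sum_F \min(\deg_i(F),t)$, summed over all rank-$i$ faces $F$. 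On the other side, the $t$-th partial sum of $d^{\,(k-i,i)}(\Delta)$ is the largest value of $\sum_{G \in S}\deg_{k-i}(G)$ as $S$ ranges over $t$-element sets of rank-$(k-i)$ faces. So the proposition reduces to showing, for every such $S$,
\begin{equation*}
 \sum_{G \in S}\deg_{k-i}(G) \ \le\ \sum_F \min(\deg_i(F),\,t), \qquad t = |S|.
\end{equation*}

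The heart of the argument is a complementation trick inside each facet. I would rewrite the left side as the number of incident pairs $(G,H)$ with $G \in S$ and $H$ a facet containing $G$, and then send each such pair to $(F,H)$ where $F := H \setminus G$. Since $|H| = k$ and $|G| = k-i$, the set $F$ is a rank-$i$ face of $\Delta$ contained in $H$, and the assignment is reversible because $G = H \setminus F$; thus $(G,H) \mapsto (F,H)$ is a bijection onto the pairs $(F,H)$ with $F$ a rank-$i$ face, $H \supset F$ a facet, and $H \setminus F \in S$. Grouping these image pairs by $F$, it remains to bound, for each fixed $F$, the number $N_F$ of facets $H \supset F$ with $H \setminus F \in S$. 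On one hand $N_F \le \deg_i(F)$ trivially; on the other hand $H \mapsto H \setminus F$ injects the facets counted by $N_F$ into $S$, so $N_F \le |S| = t$. Hence $N_F \le \min(\deg_i(F),t)$, and summing over $F$ yields precisely the displayed inequality.

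Finally, to promote these partial-sum inequalities to genuine majorization I would check that the two sequences have equal total sum: both $\sum_F \deg_i(F)$ and $\sum_G \deg_{k-i}(G)$ count all (face, facet) incidences and equal $(\text{number of facets}) \cdot \binom{k}{i}$, while conjugation preserves sums, so $\left(d^{\,(i,k-i)}(\Delta)\right)^T$ and $d^{\,(k-i,i)}(\Delta)$ have the same total. I expect the main obstacle to be the complementation step: it is exactly where the simplicial-complex hypothesis enters (every subset of a face is a face, so $F$ and $H \setminus F$ are genuine faces), and verifying that the map is a bijection while the per-$F$ count is bounded by both $\deg_i(F)$ and $t$ is the crux on which the whole inequality rests.
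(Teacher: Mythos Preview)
Your argument is correct: the identity $\sum_{j\le t}(\lambda^T)_j=\sum_m\min(\lambda_m,t)$ reduces the conjugate majorization to the displayed inequality, and the complementation bijection $(G,H)\mapsto(H\setminus G,H)$ together with the two trivial bounds $N_F\le\deg_i(F)$ and $N_F\le|S|$ finishes it cleanly. The equal-sum check is also fine.

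As to comparison with the paper: there is nothing to compare. The paper does not prove this proposition; it merely quotes it from Duval--Reiner. What the paper \emph{does} prove is a different, direct majorization $d^{\,(i,k-i)}(\Delta)\unrhd d^{\,(k-i,i)}(\Delta)$ (the $m=2$ case of Theorem~\ref{thm:facetoflag}), obtained by a Hall-type matching argument rather than complementation inside facets. Neither statement implies the other in general (for instance $a=(4)\unrhd b=(2,2)$ but $a^T=(1,1,1,1)\not\unrhd b$), so your proof genuinely supplies something the paper only cites. Note too that your complementation step uses that facets are honest $k$-element sets, so the argument is specific to simplicial \emph{complexes} and does not extend to the simplicial posets the paper treats later; by contrast the paper's direct majorization does.
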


In fact, a more direct relationship exists between $d^{\, (i,k-i)}$ and
$d^{\, (k-i,i)}$, and between more general ``face-to-flag'' 
degree sequences where the compositions of rank jumps for the flags are
permutations of each other.  These generalized degree sequences, indexed
by compositions of rank jumps, are defined in the subsequent section.

\section{Generalized degree sequences}\label{section:gen deg seq}

\subsection{Face-to-flag degree sequences}\ 

For the remainder of the paper, we will assume that all simplicial
complexes and posets are pure.  That is, we assume that all maximal faces have the same rank.

\begin{definition}
A \emph{simplicial poset} is a poset with a minimal element $\hat{0}$ in which every principal order ideal $[\hat{0},x]$ is a boolean algebra.
\end{definition}
In parallel to the terminology for the more commonly
studied vertex-to-facet degree sequences of simplicial complexes, we
will also use the term \emph{face} to denote an element of the poset,
and \emph{flag} to denote a nested sequence of faces (that is, a chain) in the poset.
Additionally, when no confusion can arise, we will move freely between
these two interpretations.

\begin{definition}\label{defn:face to flag}
For a pure, rank-$k$ simplicial poset $\mathcal{P}$, and $\sigma = (\sigma_1, \ldots, \sigma_m)$ a
composition of $k$,  the \emph{face-to-flag degree sequence}
$\dflag(\mathcal{P})$ is
\begin{equation*}
\dflag(\mathcal{P}) = \left(\dflag(F_1), \dflag(F_2), \ldots, \dflag(F_s)\right),
\end{equation*}
\noindent where $\{F_1, F_2, \ldots, F_s\}$ are the rank-$\sigma_1$ faces of $\mathcal{P}$, and
\begin{equation*}
\dflag(F_i) = |\{F_i \subset X_2 \subset \cdots \subset X_m  : {\rm rk}(X_j) =
\sigma_1 + \sigma_2 + \cdots + \sigma_j \}|.
\end{equation*}
\noindent   The sequence $\dflag(\mathcal{P})$ records the
degrees of rank-$\sigma_1$ faces to flags with rank jumps $\sigma_1, 
\ldots, \sigma_m$.  As with graphical degree sequences, for a fixed $\sigma$, the faces $F_i$ are indexed so that
$\dflag(F_1) \ge \dflag(F_2) \ge \ldots \ge \dflag(F_s)$.
 We assume that all flags end at the top rank, since otherwise
one could take the appropriate truncation of the poset.
\end{definition}

Note that there is an abuse of notation in Definition~\ref{defn:face to flag}: the function $\dflag$ is defined both on a simplicial poset and on a face, which itself could be considered as a simplicial poset.  Throughout this article, the context of the usage should be clear.

\begin{example}
\label{ex:motivating}
Let $\Delta$ be the pure, rank-$4$ simplicial complex with facets $\{1234,
 1246, 1256\}$.  Consider the following three degree sequences recording
vertex-to-edge/tetrahedron flags,
vertex-to-triangle/tetrahedron flags, and edge-to-triangle/tetrahedron
flags, respectively.
\begin{eqnarray*}
d^{\, (1,1,2)}(\Delta) &=& (9,9,6,6,3,3)\\
d^{\, (1,2,1)}(\Delta) &=& (9,9,6,6,3,3)\\
d^{\, (2,1,1)}(\Delta) &=& (6,4,4,4,4,2,2,2,2,2,2,2)
\end{eqnarray*}
From here it is easy to see that
\begin{equation*}
d^{\, (1,1,2)}(\Delta) = d^{\, (1,2,1)}(\Delta) \unrhd d^{\, (2,1,1)}(\Delta).
\end{equation*}
\end{example}

To formalize the majorization behavior observed in Example~\ref{ex:motivating}, comparing face-to-flag degree sequences for a given complex, fix $\mathcal{P}$ a rank-$k$
simplicial poset, and $\sigma = (\sigma_1, \ldots, \sigma_m)$ a composition of $k$.  Additionally, let $\sum \dflag (\mathcal{P})$ denote the sum of the entries in $\dflag(\mathcal{P})$.

Let $\{F_1, \ldots, F_s\}$ be the rank-$\sigma_1$ elements of $\mathcal{P}$.  Observe that
\begin{eqnarray}
 \dflag(\mathcal{P}) = \binom{k-\sigma_1}{\sigma_2, \ldots, \sigma_m} \left( d^{\, (\sigma_1,k-\sigma_1)}(F_1), \ldots, d^{\, (\sigma_1,k-\sigma_1)}(F_s) \right).
 \label{eqn:binomial}
 \end{eqnarray}

\noindent Thus, the sum of the entries in $\dflag(\mathcal{P})$ is 
\begin{eqnarray}\label{eqn:sum of deg seq}
\sum \dflag(\mathcal{P}) &=& \binom{k-\sigma_1}{\sigma_2, \ldots, \sigma_m} \sum_{l=1}^s d^{\, (\sigma_1,k-\sigma_1)}(F_l) 
	=  \binom{k-\sigma_1}{\sigma_2, \ldots, \sigma_m} f_{\sigma_1,k}(\mathcal{P})\nonumber \\ 
	& = &\binom{k-\sigma_1}{\sigma_2, \ldots, \sigma_m} f_k(\mathcal{P}) \binom{k}{\sigma_1} = f_k(\mathcal{P}) \binom{k}{\sigma_1, \ldots, \sigma_m},
\end{eqnarray}
 where $f_k(\mathcal{P})$ is the number of rank-$k$ elements of $\mathcal{P}$, and $$f_{\sigma_1,k}(\mathcal{P}) =|\{X_1 \subset X_2: {\rm rk}(X_1)=\sigma_1, {\rm rk}(X_2) = k\}|$$ is an entry in the flag-$f$ vector of $\mathcal{P}$.
 Additionally, since the right side of equation~\eqref{eqn:sum of deg seq} only depends on the entries of $\sigma$, we see that $\sum \dflag(\mathcal{P}) = \sum \dpflag(\mathcal{P})$ for any permutation $\pi$ of the composition $\sigma$.

\begin{lemma}
Fix $\sigma = (\sigma_1, \ldots, \sigma_m)$ a composition of  $k$ and $\pi$ a permutation of $\sigma$, with $\pi_1 > \sigma_1$.  If $F$ is a rank-$\sigma_1$ element of a pure, rank-$k$ simplicial poset $\mathcal{P}$, and $G > F$ is a rank-$\pi_1$ element, then $\dflag(F) \geq \dpflag(G)$.  
\label{lemma:F<G}
\end{lemma}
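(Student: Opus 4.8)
The plan is to reduce both sides to a product of a face-to-facet degree and a purely combinatorial multinomial factor, using the structural identity~\eqref{eqn:binomial}. Read entrywise, that identity gives
$\dflag(F) = \binom{k-\sigma_1}{\sigma_2,\ldots,\sigma_m}\, d^{\,(\sigma_1,k-\sigma_1)}(F)$
and likewise
$\dpflag(G) = \binom{k-\pi_1}{\pi_2,\ldots,\pi_m}\, d^{\,(\pi_1,k-\pi_1)}(G)$.
The justification is the defining property of a simplicial poset: every interval $[\hat 0, X]$ below a top face $X$ is boolean, so the flags running from a fixed face up to $X$ with the prescribed rank jumps are counted by a single multinomial coefficient that does not depend on $X$. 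With this, the claim $\dflag(F) \ge \dpflag(G)$ splits into a comparison of face-to-facet counts and a comparison of multinomial coefficients.

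First I would handle the face-to-facet counts. Since $F < G$ and the order is transitive, every rank-$k$ face lying above $G$ also lies above $F$; hence the top faces over $G$ form a subset of those over $F$, giving the inequality $d^{\,(\pi_1,k-\pi_1)}(G) \le d^{\,(\sigma_1,k-\sigma_1)}(F)$. It then suffices to prove $\binom{k-\sigma_1}{\sigma_2,\ldots,\sigma_m} \ge \binom{k-\pi_1}{\pi_2,\ldots,\pi_m}$, because multiplying the larger multinomial by the larger facet count preserves the inequality (all four quantities being nonnegative).

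For the multinomial comparison I would use that $\pi$ is a permutation of $\sigma$, so the products $\prod_i \sigma_i!$ and $\prod_i \pi_i!$ coincide. Rewriting each multinomial over this common denominator reduces the desired inequality to $\sigma_1!\,(k-\sigma_1)! \ge \pi_1!\,(k-\pi_1)!$, equivalently $\binom{k}{\sigma_1} \le \binom{k}{\pi_1}$. This binomial comparison is the \emph{crux} of the argument, and I expect it to be the main obstacle, since it is the only place where the hypothesis $\pi_1 > \sigma_1$ is genuinely used and where one must prevent the unimodal coefficients from ``turning over'' before reaching $\pi_1$. The observation that makes it go through is that $\pi_1 \neq \sigma_1$ while $\pi_1$ is itself a part of $\sigma$, so $\pi_1$ occurs at a position other than the first; thus $\sigma_1 + \pi_1$ is a sum of two distinct parts of $\sigma$ and therefore at most $k$. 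Consequently $\sigma_1 \le \min(\pi_1, k-\pi_1) \le k/2$, and the unimodality of the binomial coefficients together with the symmetry $\binom{k}{\pi_1} = \binom{k}{k-\pi_1}$ yields $\binom{k}{\sigma_1} \le \binom{k}{\pi_1}$. Combining the three steps gives $\dflag(F) \ge \dpflag(G)$.
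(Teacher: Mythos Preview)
Your argument is correct and follows essentially the same route as the paper: both proofs use the factorization from equation~\eqref{eqn:binomial}, then combine the face-to-facet inclusion $d^{(\pi_1,k-\pi_1)}(G)\le d^{(\sigma_1,k-\sigma_1)}(F)$ with the multinomial inequality, which in either version reduces to $\binom{k}{\sigma_1}\le\binom{k}{\pi_1}$. If anything, your justification of that last binomial step via $\sigma_1+\pi_1\le k$ (hence $\sigma_1\le\min(\pi_1,k-\pi_1)$) and unimodality is more explicit than the paper's, which asserts it from $\pi_1>\sigma_1$ without spelling out why the coefficients cannot have ``turned over.''
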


\begin{proof}

Let $\pi_1=\sigma_l$ for some $1< l \le m$.
\begin{eqnarray*}
	\dflag(F) & = & d^{\, (\sigma_1,k-\sigma_1)}(F) \binom{k-\sigma_1}{\sigma_2, \ldots, \sigma_m} 
	 =  \mathop{\sum_{H: {\rm rk}(H)=k}}_{ F < H} \binom{k-\sigma_1}{\pi_1} \binom{k-\sigma_1-\pi_1}{\sigma_2, \ldots, \widehat{\sigma_l},		\ldots, \sigma_m}  \\
	&\geq&   \mathop{\sum_{H: {\rm rk}(H)=k}}_{ G < H} \binom{k-\sigma_1}{\pi_1} \binom{k-\sigma_1-\pi_1}{\sigma_2, \ldots, 		\widehat{\sigma_l},\ldots, 	\sigma_m} \\
	&= &   \frac{\binom{k-\sigma_1}{k-\sigma_1-\pi_1}}{\binom{k-\pi_1}{k-\pi_1-\sigma_1}} \mathop{\sum_{H: {\rm rk}(H)=k}}_{ G < H} \binom	{k-\pi_1}{\sigma_1} \binom{k-\sigma_1-\pi_1}{\sigma_2, \ldots, \widehat{\sigma_l},\ldots, \sigma_m}  \\
	&\geq &  \mathop{\sum_{H: {\rm rk}(H)=k}}_{ G < H} \binom{k-\pi_1}{\pi_2, 
	\ldots, \pi_m}  = d^{\, (\pi_1,k-\pi_1)}(G) \binom{k-\pi_1}{\pi_2, \ldots, \pi_m} =  \dpflag(G).
\end{eqnarray*}

\noindent The last inequality is due to the fact that $\pi_1 >\sigma_1$, and $\{\sigma_1,
\ldots, \widehat{\sigma_l}, \ldots, \sigma_m\}$ and $\{\pi_2, \ldots, \pi_m\}$ are
equal as sets.
\end{proof}

\begin{theorem}
\label{thm:facetoflag}

Fix a pure, rank-$k$ simplicial poset $\mathcal{P}$, a composition $\sigma = (\sigma_1, \ldots, \sigma_m)$ of $k$, and $\pi = ( \pi_1, \ldots, \pi_m)$ a permutation of $\sigma$.  If $\pi_1  \geq \sigma_1$, then 
\[
\dflag(\mathcal{P}) \unrhd \dpflag(\mathcal{P}).
\]

\end{theorem}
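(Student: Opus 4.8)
The plan is to verify the two requirements of majorization separately: equality of the total sums, and the chain of partial-sum inequalities. The sums already agree, since by \eqref{eqn:sum of deg seq} we have $\sum \dflag(\mathcal{P}) = f_k(\mathcal{P})\binom{k}{\sigma_1,\ldots,\sigma_m} = \sum \dpflag(\mathcal{P})$ because $\pi$ is a permutation of $\sigma$. The case $\pi_1 = \sigma_1$ is immediate: by \eqref{eqn:binomial} both sequences equal the common scalar $\binom{k-\sigma_1}{\sigma_2,\ldots,\sigma_m} = \binom{k-\pi_1}{\pi_2,\ldots,\pi_m}$ times the rank-$\sigma_1$ face-to-facet degree sequence, so $\dflag(\mathcal{P}) = \dpflag(\mathcal{P})$ and there is nothing to prove. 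Hence I assume $\pi_1 > \sigma_1$, and by \eqref{eqn:binomial} the whole comparison reduces to comparing two scaled face-to-facet degree sequences, one at rank $\sigma_1$ and one at rank $\pi_1$.

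It then suffices to show, for every $t$, that the sum of the $t$ largest entries of $\dflag(\mathcal{P})$ is at least the sum of the $t$ largest entries of $\dpflag(\mathcal{P})$. Fix $t$, let $\mathcal{T} = \{G_1,\ldots,G_t\}$ be the rank-$\pi_1$ faces of largest flag degree, and let $\mathcal{S}$ be their \emph{lower shadow}, the set of rank-$\sigma_1$ faces $F$ with $F < G$ for some $G \in \mathcal{T}$. By Lemma \ref{lemma:F<G}, every such $F$ satisfies $\dflag(F) \ge \dpflag(G)$.

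The engine of the argument is a per-facet count carried out inside the boolean intervals $[\hat 0, H]$. Writing $\dpflag(G)$ and $\dflag(F)$ as constant multiples of their numbers of containing facets as in \eqref{eqn:binomial}, I rewrite $\sum_{G \in \mathcal{T}}\dpflag(G)$ and $\sum_{F \in \mathcal{S}}\dflag(F)$ as sums over facets $H$ of, respectively, the number $q_H$ of faces of $\mathcal{T}$ below $H$ and the number of faces of $\mathcal{S}$ below $H$. Inside the single boolean lattice $[\hat 0,H] \cong 2^{[k]}$, each of the $q_H$ selected rank-$\pi_1$ faces covers exactly $\binom{\pi_1}{\sigma_1}$ rank-$\sigma_1$ faces, while each rank-$\sigma_1$ face is contained in at most $\binom{k-\sigma_1}{\pi_1-\sigma_1}$ rank-$\pi_1$ faces; since $\binom{\pi_1}{\sigma_1}\big/\binom{k-\sigma_1}{\pi_1-\sigma_1} = \binom{k}{\sigma_1}\big/\binom{k}{\pi_1}$, a double count shows the number of faces of $\mathcal{S}$ below $H$ is at least $\big(\binom{k}{\sigma_1}/\binom{k}{\pi_1}\big)q_H$. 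Summing over $H$ and using the identity $\binom{k-\sigma_1}{\sigma_2,\ldots,\sigma_m}\binom{k}{\sigma_1} = \binom{k-\pi_1}{\pi_2,\ldots,\pi_m}\binom{k}{\pi_1}$ (both equal $\binom{k}{\sigma_1,\ldots,\sigma_m}$) yields $\sum_{F \in \mathcal{S}}\dflag(F) \ge \sum_{G \in \mathcal{T}}\dpflag(G)$. The identical computation applied to any $\mathcal{U}\subseteq\mathcal{T}$ in place of $\mathcal{T}$ gives the defect form of Hall's condition: the faces below $\mathcal{U}$ carry total $\dflag$-weight at least $\sum_{G\in\mathcal{U}}\dpflag(G)$.

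The main obstacle is that $\mathcal{S}$ may contain more than $t$ faces, so the bound $\sum_{F \in \mathcal{S}}\dflag(F) \ge \sum_{G \in \mathcal{T}}\dpflag(G)$ does not by itself compare to the sum of only the $t$ largest $\dflag$-values. I expect to close this gap by promoting the shadow estimate to an assignment that charges the whole of $\sum_{G\in\mathcal{T}}\dpflag(G)$ to at most $t$ faces: since $\dpflag(G)\le\dflag(F)$ for \emph{every} $F<G$, each selected $G$ can be charged in full to a single face beneath it, and the defect form of Hall's condition above is what I would invoke to arrange these charges so that no face $F$ exceeds its capacity $\dflag(F)$. As each of the $t$ faces in $\mathcal{T}$ is charged to one face $F$, at most $t$ faces receive charge, and their total $\dflag$-weight, being at least $\sum_{G\in\mathcal{T}}\dpflag(G)$, is then bounded above by the sum of the $t$ largest entries of $\dflag(\mathcal{P})$, giving the partial-sum inequality. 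It is precisely the boolean structure of the intervals $[\hat 0,H]$ — two rank-$\pi_1$ faces with the same rank-$\sigma_1$ faces below them cannot lie in a common facet — that drives the per-facet count and hence Hall's condition, and this is exactly the feature that the polytopal counterexample of Section~\ref{section:non-simp deg seq} must violate.
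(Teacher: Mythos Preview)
Your setup is sound: the equal-sum check via \eqref{eqn:sum of deg seq}, the case $\pi_1=\sigma_1$ via \eqref{eqn:binomial}, and your per-facet shadow inequality
\[
\sum_{F\in N(\mathcal U)} \dflag(F)\ \ge\ \sum_{G\in\mathcal U}\dpflag(G)\qquad\text{for every }\mathcal U\subseteq\mathcal T
\]
are all correct. Together with Lemma~\ref{lemma:F<G} these are exactly the two ingredients the paper uses.

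The gap is in how you combine them. The ``defect form of Hall's condition'' you have established is a \emph{weighted} Hall condition; it guarantees a fractional flow sending $\dpflag(G)$ units out of each $G$ without exceeding capacity $\dflag(F)$ at any $F$, but it does \emph{not} produce the integral assignment you describe, in which each $G$ is charged in full to a single $F$. Concretely, take $\mathcal P$ a single rank-$4$ simplex with $\sigma=(1,1,2)$ and $\pi=(2,1,1)$: then $\dflag=(3,3,3,3)$ and $\dpflag=(2,2,2,2,2,2)$. For $t=5$ your weighted Hall condition holds, yet no map sending each of five edges to a single endpoint can respect the capacities, since each vertex can absorb at most one edge ($2+2>3$). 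So the step ``at most $t$ faces receive charge'' is not justified by what you have proved. Even if one restricts to $t\le s$ (which you do not), the existence of such an assignment in a simplicial poset would itself require proof; it is not a consequence of the weighted Hall inequality.

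The repair is simpler than any matching argument, and it is what the paper does: induct on $t$. If the shadow $\mathcal S$ of $\{G_1,\ldots,G_t\}$ is contained in $\{F_1,\ldots,F_t\}$, your shadow inequality already gives $\sum_{i\le t}\dflag(F_i)\ge\sum_{F\in\mathcal S}\dflag(F)\ge\sum_{i\le t}\dpflag(G_i)$. Otherwise some $F_j$ with $j>t$ lies below some $G_l$ with $l\le t$, and Lemma~\ref{lemma:F<G} gives
\[
\dflag(F_t)\ \ge\ \dflag(F_j)\ \ge\ \dpflag(G_l)\ \ge\ \dpflag(G_t),
\]
so the $t$th terms compare and the $t$th partial-sum inequality follows from the $(t-1)$st. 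This two-case induction replaces your charging step entirely; note that the second case is exactly your shadow estimate specialized to $N(\mathcal T)\subseteq\{F_1,\ldots,F_t\}$, so your per-facet computation is not wasted.
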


\begin{proof}  

We recall that $\sum \dflag(\mathcal{P})$ depends only on the entries of $\sigma$, and therefore $\sum \dflag(\mathcal{P}) = \sum \dpflag(\mathcal{P})$.
We divide the rest of the proof into two cases: $\pi_1 = \sigma_1$ and $\pi_1> \sigma_1$.

First, consider the case $\pi_1 = \sigma_1$. Here we are considering elements of
the same rank compared to different flags.  By equation~\eqref{eqn:binomial}, we see that $\dflag(\mathcal{P})=\dpflag(\mathcal{P})$.

Now, suppose $\pi_1>\sigma_1$.  Let $\{F_1, F_2, \ldots, F_s\} $ be the rank-$\sigma_1$ elements of $\mathcal{P}$, and let $\{G_1, G_2, \ldots, G_t\}$ be the rank-$\pi_1$ elements.  Furthermore, assume they are labeled so that 
\[ \dflag(F_i) \geq \dflag(F_j) \text{ and } \dpflag(G_i) \geq \dpflag(G_j) \]
for all $i<j$. 

By Lemma \ref{lemma:F<G}, if $F < G$, then $\dflag(F) \geq \dpflag(G)$.
It remains to prove that
\begin{equation}\label{eqn:inequality in proof}
\sum_{l=1}^{r} \dpflag(G_l) \leq \sum_{l=1}^{r} \dflag(F_l) \, \, \, \textrm{for all} \,\,\, r \leq s.
\end{equation}

We prove this by induction on $r$.  Suppose $r = 1$, and consider the smallest
$j$ such that $F_j < G_1$. Then we have
$$ \dflag(F_1) \geq \dflag(F_j) \geq \dpflag(G_1).$$

For $r > 1$, we consider two subcases.  

Suppose there exists $F_j < G_l$ with $j > r$ and $l
\leq r$.  As above, we have
$$ \dflag(F_r) \geq \dflag(F_j) \geq \dpflag(G_l) \geq \dpflag(G_r).$$
In this case, the $r$th terms in the sums satisfy the necessary inequalities themselves,
hence the $r$th partial sum, equation~\eqref{eqn:inequality in proof}, holds by induction. 

Now assume there does not exist $F_j < G_l$ with $j>r$ and $l \leq r$.  We prove this case directly.  First, note that if we sum the degrees of all rank-$\pi_1$ elements which are greater than a given rank-$\sigma_1$ element, we count the degree of the fixed rank-$\sigma_1$ element $\binom{\pi_1}{\sigma_1}$ times:  
$$ \sum_{G_l : F_j < G_l} \dpflag(G_l) = \binom{\pi_1}{\sigma_1} \dflag(F_j). $$

\noindent This implies that
$$
\begin{aligned}
\binom{\pi_1}{\sigma_1} \sum_{j = 1}^{r} \dflag(F_j) &= \sum_{j=1}^{r}
\sum_{G_l : F_j < G_l} \dpflag( G_l) \\ &\geq \sum_{j=1}^{r}
\sum_{\substack{G_l : F_j < G_l \\ l \leq r}} \dpflag(G_l) \\ &= \binom{\pi_1}{ \sigma_1} \sum_{j=1}^{r} \dpflag(G_j).
\end{aligned}
$$
\end{proof}

As a corollary, we obtain an inequality for the $f$-vector of a simplicial poset.  This result was first shown by Stanley (see \cite{Hibi} for a brief history of this inequality).

\begin{corollary}
For $\mathcal{P}$ a pure simplicial poset of rank $k$, with $i \le k-i$,
$$ f_{i} \leq f_{k-i}, $$
where $f_i$ is the $i$th entry of the $f$-vector of $\mathcal{P}$, indexed by rank.
\end{corollary}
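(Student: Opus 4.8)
The plan is to specialize Theorem~\ref{thm:facetoflag} to two-part compositions and then extract an inequality on sequence \emph{lengths} from the resulting majorization. Concretely, I would take $\sigma = (i,k-i)$ and $\pi = (k-i,i)$, which is a permutation of $\sigma$. The hypothesis $i \le k-i$ gives $\pi_1 = k-i \ge i = \sigma_1$, so the theorem applies and yields
\[
d^{\,(i,k-i)}(\mathcal{P}) \unrhd d^{\,(k-i,i)}(\mathcal{P}).
\]
The left-hand sequence is indexed by the rank-$i$ faces, so it has exactly $f_i$ entries, while the right-hand sequence is indexed by the rank-$(k-i)$ faces and has exactly $f_{k-i}$ entries.

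Next I would record two facts about these sequences. First, by the computation preceding the theorem (equation~\eqref{eqn:sum of deg seq}), together with $\binom{k}{i} = \binom{k}{k-i}$, both sequences have the same sum $S = f_k(\mathcal{P})\binom{k}{i}$, which is exactly what makes $\unrhd$ a genuine majorization rather than a weak one. Second, every entry of each sequence is strictly positive: since $\mathcal{P}$ is pure of rank $k$, each face lies below some maximal element, which has rank $k$, so each rank-$i$ (respectively rank-$(k-i)$) face is contained in at least one facet and hence has positive degree. Thus I will be comparing two strictly positive integer sequences, of lengths $f_i$ and $f_{k-i}$, having equal sum, with the longer-or-equal relation to be determined.

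The crux is the elementary observation that if $a \unrhd b$ are nonincreasing sequences of \emph{positive} integers with the same sum, then $a$ has no more entries than $b$. I would prove this by contradiction in the case at hand: writing $a = d^{\,(i,k-i)}(\mathcal{P})$ and $b = d^{\,(k-i,i)}(\mathcal{P})$, suppose $f_i > f_{k-i}$. Then $f_{k-i} \le f_i - 1$, so the index $f_{k-i}$ occurs among the majorization inequalities (which run up to length minus one), giving $\sum_{l=1}^{f_{k-i}} a_l \ge \sum_{l=1}^{f_{k-i}} b_l = S$, the last equality because $b$ has exactly $f_{k-i}$ entries. On the other hand, since $a$ has $f_i > f_{k-i}$ positive entries, the entry $a_{f_{k-i}+1}$ is positive, forcing $\sum_{l=1}^{f_{k-i}} a_l < S$. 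This contradiction yields $f_i \le f_{k-i}$.

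I expect the main obstacle to be conceptual rather than computational: the real content is recognizing that a majorization statement, which on its face only compares partial sums, secretly encodes the length inequality once one knows all entries are positive. The supporting points are routine but must be checked honestly, namely that the relevant partial sum is actually controlled by the definition of $\unrhd$ (so one must verify $f_{k-i}$ stays within the range of indices appearing there, which is where $i \le k-i$ enters via $\pi_1 \ge \sigma_1$) and that positivity of every entry is genuinely forced by purity together with the convention that flags reach the top rank.
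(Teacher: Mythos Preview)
Your proposal is correct and follows essentially the same route as the paper's proof: apply Theorem~\ref{thm:facetoflag} to $\sigma=(i,k-i)$ and $\pi=(k-i,i)$, then use the elementary fact that if $a \unrhd b$ with all entries positive (guaranteed by purity), the length of $a$ cannot exceed that of $b$. The paper states this length comparison as a one-line observation about non-zero entries, whereas you spell out the contradiction argument and the check that the relevant partial-sum index lies within the range covered by the majorization inequalities; this extra care is fine and does not constitute a different method.
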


\begin{proof}

If $a$ and $b$ are nonnegative sequences with $a \unrhd b$, then the number of non-zero entries of
$a$ must be less than or equal to the number of non-zero entries of
$b$.  For the sequence $d^{\, (i,k-i)}$, the number of (non-zero) entries is
equal to the number of rank-$i$ elements of $\mathcal{P}$, because $\mathcal{P}$ is pure of rank $k$.
\end{proof}

\begin{remark}
Note also that:
$$ \sum_{l=1}^s d^{\, (i,j-i)}(F_l) = \sum_{l=1}^s d^{\, (j-i,i)}(F_l) = f_{i,j} = f_{j-i,j}$$
where the $f_{i,j}$ are entries in the flag $f$-vector of $\mathcal{P}$, indexed by ranks. Hence,
we can view generalized degree sequences as refinements of flag
$f$-vectors. The theorem above states that for all pure simplicial posets, and for 
$i<j-i$, the contributions which make up $f_{j-i,j}$ are ``more
spread out''  than those which make up $f_{i,j}$.
\label{remark:SequenceSum}
\end{remark}

The definition of face-to-flag degree sequences can easily be extended
to a notion of flag-to-flag degree sequences which record the degrees
of flags with specified rank jumps to flags with continuing rank
jumps.  The flag-to-flag degree sequence can be computed in terms of
face-to-flag degree sequences.  An analogous majorization result holds in this
case where the necessary condition is in terms of the relative
sizes of the sums of rank jumps of the initial flags.

\section{Non-simplicial degree sequences}\label{section:non-simp deg seq}

The definition of face-to-flag degree sequences 
is not specific to simplicial posets,
 and so we carry over that definition to the
setting of a pure polyhedral complex.  In particular, if all maximal
faces of our complex are \emph{simple}
polytopes, then many of the results from the simplicial case also hold.  For a discussion of simple polytopes, see \cite{Ziegler}.
\begin{definition}
A polytope is \emph{simple} if every proper upper interval in its face lattice, $[x,\hat 1]$ with $x \neq \hat0$, is a boolean algebra.
\end{definition}

As a starting point, we observe that Lemma~\ref{lemma:F<G}
holds in this case.

\begin{lemma}
\label{lemma:cubicalF<G}
Let $\sigma =(\sigma_1, \ldots, \sigma_m)$ be a
  composition of $k$, and $\pi = (\pi_1, \ldots, \pi_m)$ a
  permutation of $\sigma$ with $\sigma_1<\pi_1$.  If $F$ is a
  rank-$\sigma_1$ face of a pure, rank-$k$ complex in which every maximal face is a simple polytope, 
   and if $G \supset F$ is a rank-$\pi_1$ face, then $\dflag(F) \geq \dpflag(G)$.
\end{lemma}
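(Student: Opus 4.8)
**The plan is to mimic the simplicial proof of Lemma~\ref{lemma:F<G}, replacing the binomial/multinomial counts of flags through a facet with counts adapted to the combinatorics of a simple polytope.** The key structural fact I want to exploit is the definition of simple: every proper upper interval $[x,\hat1]$ in the face lattice of a simple polytope is a boolean algebra. For a rank-$k$ simple polytope, this means the faces above a fixed rank-$\sigma_1$ face $F$, up to the top, form a boolean lattice, so the number of faces of each intermediate rank above $F$—and more importantly the number of flags through $F$ with prescribed rank jumps—is governed by binomial coefficients, exactly as in the simplicial case. The crucial observation is that in $\dflag(F)$, the count of flags above $F$ refining the composition $(\sigma_2,\dots,\sigma_m)$ inside the rank window from $\sigma_1$ to $k$ factors as a multinomial coefficient times the number of top-rank faces (facets) above $F$, since the upper interval above $F$ is boolean of rank $k-\sigma_1$.

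First I would establish the analogue of equation~\eqref{eqn:binomial} in this setting, namely that
\[
\dflag(F) = \binom{k-\sigma_1}{\sigma_2, \ldots, \sigma_m}\, d^{\,(\sigma_1,k-\sigma_1)}(F),
\]
which follows because $[F,\hat1]$ is boolean of rank $k-\sigma_1$, so every facet $H>F$ contributes exactly $\binom{k-\sigma_1}{\sigma_2,\dots,\sigma_m}$ flags with the specified rank jumps. Next I would rewrite this count as a sum over facets $H>F$, extracting the factor for reaching rank $\pi_1$ first: writing $\pi_1=\sigma_l$, each facet $H>F$ contributes $\binom{k-\sigma_1}{\pi_1}\binom{k-\sigma_1-\pi_1}{\sigma_2,\dots,\widehat{\sigma_l},\dots,\sigma_m}$. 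This is the exact string of equalities from the proof of Lemma~\ref{lemma:F<G}; the algebra is identical once the boolean structure of the upper interval supplies the multinomial identities.

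The inequality steps then proceed verbatim. I would pass from a sum over facets $H>F$ to a sum over facets $H>G$ (a smaller index set, since $G\supset F$ means facets above $G$ are a subset of those above $F$), giving the first $\geq$. Then I would perform the same binomial-ratio manipulation to convert $\binom{k-\sigma_1}{\sigma_2,\dots,\sigma_m}$-type coefficients into $\binom{k-\pi_1}{\pi_2,\dots,\pi_m}$-type coefficients, where the final inequality uses $\pi_1>\sigma_1$ together with the set equality $\{\sigma_1,\ldots,\widehat{\sigma_l},\ldots,\sigma_m\}=\{\pi_2,\ldots,\pi_m\}$, exactly as in the simplicial argument. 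Collecting terms yields $\dflag(F)\geq\dpflag(G)$.

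\textbf{The main obstacle is verifying that the boolean structure of upper intervals genuinely licenses the same flag counts as in the simplicial case.} In a simplicial poset the \emph{lower} intervals are boolean, which makes flag-counting above a face automatic via the rank window; for a simple polytope it is the \emph{upper} intervals that are boolean, and I must confirm that this is precisely the property needed—that counting flags $F<X_2<\cdots<X_m<H$ inside $[F,\hat1]$ (equivalently inside the boolean lattice $[F,\hat1]$) reduces to choosing rank increments, independent of which facet $H$ we sit under. I expect this to work cleanly because the boolean interval $[F,\hat1]$ has rank $k-\sigma_1$ and its sub-flags with prescribed rank jumps are counted by the same multinomial coefficient for every maximal chain, but I would need to state carefully that maximal faces being simple polytopes guarantees $[F,\hat1]$ is boolean within each maximal face, and that flags through $F$ are enumerated facet-by-facet without overcounting—here purity and the fact that every flag ends at the top rank ensure the bookkeeping matches the simplicial computation exactly.
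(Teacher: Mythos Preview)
Your proposal is correct and follows essentially the same approach as the paper: the paper's proof simply states that Lemma~\ref{lemma:cubicalF<G} follows exactly as in Lemma~\ref{lemma:F<G}, with the only new observation being that the multinomial identity $\dflag(F) = d^{\,(\sigma_1,k-\sigma_1)}(F)\binom{k-\sigma_1}{\sigma_2,\ldots,\sigma_m}$ still holds because every upper interval ending at rank $k$ is boolean. One small notational point: rather than speaking of a single interval $[F,\hat1]$ (the complex has no global $\hat1$), it is cleaner, as the paper does, to work facet-by-facet with the boolean intervals $[F,H]$ for each rank-$k$ face $H>F$; you already note this in your final paragraph, so the argument goes through.
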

\begin{proof}
Lemma~\ref{lemma:cubicalF<G} follows as in Lemma~\ref{lemma:F<G}.  In
particular, the initial multinomial expression
$$\dflag(F) = d^{\, (\sigma_1,k-\sigma_1)}(F)
\binom{k-\sigma_1}{\sigma_2, \ldots, \sigma_m}$$

\noindent is easy to see, since every upper interval of the face lattice ending at rank $k$ is boolean.
\end{proof}

The simple case differs from the simplicial case in that the 
sums of the degree sequences $\sum \dflag(\mathcal{C})$ and $\sum
\dpflag(\mathcal{C})$ are not necessarily equal for a complex 
$\mathcal{C}$ with simple maximal faces.
However, if every maximal face of $\mathcal{C}$
is the \emph{same} simple polytope, then we establish an inequality on
the sums which implies weak majorization, as demonstrated in the following analogue
to Theorem~\ref{thm:facetoflag}.

\begin{theorem}
\label{thm:simplefacetoflag}
Fix a pure, rank-$k$ complex $\mathcal{C}$ in which every maximal face is the same simple polytope, a
composition $\sigma = (\sigma_1, \ldots, \sigma_m)$ of $k$, and $\pi
= (\pi_1, \ldots, \pi_m)$ a permutation of $\sigma$.  If $\pi_1 \geq
\sigma_1$, then
\[\dflag(\mathcal{C}) \succeq \dpflag(\mathcal{C}).\]

\end{theorem}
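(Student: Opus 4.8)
The plan is to mimic the structure of the proof of Theorem~\ref{thm:facetoflag} as closely as possible, isolating the one place where the simplicial argument relied on equality of sums. Weak majorization $\dflag(\mathcal{C}) \succeq \dpflag(\mathcal{C})$ requires two things: the partial-sum inequalities $\sum_{l=1}^r \dpflag(G_l) \le \sum_{l=1}^r \dflag(F_l)$ for all $r$, together with the total-sum inequality $\sum \dflag(\mathcal{C}) \ge \sum \dpflag(\mathcal{C})$. The partial-sum inequalities should carry over essentially verbatim from Theorem~\ref{thm:facetoflag}, since that argument used only Lemma~\ref{lemma:F<G}, which has now been replaced by its analogue Lemma~\ref{lemma:cubicalF<G}, valid in the simple-polytope setting. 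The genuinely new work is establishing the total-sum inequality, because in the simple case the two sums need not be equal.

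First I would handle the case $\pi_1 = \sigma_1$ exactly as before: by the multinomial identity of Lemma~\ref{lemma:cubicalF<G}, the two sequences $\dflag(\mathcal{C})$ and $\dpflag(\mathcal{C})$ are literally equal (up to reordering), so weak majorization holds trivially. For the case $\pi_1 > \sigma_1$, I would carry over the induction on $r$ from Theorem~\ref{thm:facetoflag} to obtain the partial-sum inequalities; the two subcases (whether or not some $F_j < G_l$ with $j>r$, $l\le r$ exists) go through because the key counting identity $\sum_{G_l : F_j < G_l} \dpflag(G_l) = \binom{\pi_1}{\sigma_1}\dflag(F_j)$ holds whenever every interval above $F_j$ up to rank $k$ is boolean, which is guaranteed by the simple-polytope hypothesis.

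The crux, then, is the total-sum inequality, and this is where I expect the hypothesis that every maximal face is the \emph{same} simple polytope to enter. Since all maximal faces are congruent, each contributes the same number of flags of each type, so $\sum\dflag(\mathcal{C})$ factors as $f_k(\mathcal{C})$ times a quantity $N_\sigma$ depending only on the single polytope and on $\sigma$. The task reduces to showing $N_\sigma \ge N_\pi$ when $\pi_1 \ge \sigma_1$, i.e.\ that a single simple polytope has at least as many flags with rank-jump composition $\sigma$ as with composition $\pi$. I would prove this at the level of one polytope $P$ by summing Lemma~\ref{lemma:cubicalF<G} over all rank-$\sigma_1$ faces $F$ of $P$: summing $\dflag(F)\ge\dpflag(G)$ over the incidence pairs $F<G$ and using the same $\binom{\pi_1}{\sigma_1}$ double-counting identity relates $N_\sigma$ and $N_\pi$ with the correct inequality direction.

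The main obstacle I anticipate is the total-sum step rather than the partial sums: in the simplicial case equality of sums was automatic from equation~\eqref{eqn:sum of deg seq}, but here I must extract a strict structural consequence of the ``same simple polytope'' assumption and verify that the per-polytope flag counts genuinely satisfy $N_\sigma \ge N_\pi$. I would want to check carefully that the multinomial bookkeeping, which in the simplicial case exploited that every upper interval is boolean, still produces a clean inequality when the lower faces of a simple polytope are not themselves simplices — the boolean structure we are permitted to use lives only in the upper intervals, so the argument must avoid assuming anything simplicial about the faces below rank $\sigma_1$.
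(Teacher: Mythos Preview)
Your proposal follows the paper's proof almost exactly: the case $\pi_1=\sigma_1$, the partial-sum induction imported from Theorem~\ref{thm:facetoflag} via Lemma~\ref{lemma:cubicalF<G}, and the reduction of the total-sum inequality to a count inside a single maximal polytope are all precisely what the paper does.

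The one place your sketch misfires is the mechanism for the per-polytope inequality $N_\sigma\ge N_\pi$. Summing the inequality of Lemma~\ref{lemma:cubicalF<G} over incidence pairs $F<G$ inside one polytope $P$ never produces $N_\pi$: the left side is $\binom{k-\sigma_1}{\pi_1-\sigma_1}N_\sigma$, and if you evaluate the right side with the $\binom{\pi_1}{\sigma_1}$ identity you cite you get $\binom{\pi_1}{\sigma_1}N_\sigma$ again, so the inequality collapses to a statement about multinomials with no $N_\pi$ in sight. What the paper actually uses (and what your final paragraph is circling) is a direct double count of incidence pairs $(F,G)$ in $P$: from the $F$ side each rank-$\sigma_1$ face lies in exactly $\binom{k-\sigma_1}{\pi_1-\sigma_1}$ rank-$\pi_1$ faces (boolean upper intervals), while from the $G$ side each rank-$\pi_1$ face contains \emph{at least} $\binom{\pi_1}{\sigma_1}$ rank-$\sigma_1$ faces, since any rank-$\pi_1$ polytope has at least as many faces of each rank as a simplex. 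That lower bound on $m_G$ is the genuinely new, non-boolean ingredient, and Lemma~\ref{lemma:cubicalF<G} plays no role in the total-sum step. (The same lower bound is also what makes the second subcase of the partial-sum induction go through as an inequality rather than the equality you get in the simplicial case, so you will want it there as well.)
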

  
\begin{proof}
The proof is identical to that of Theorem~\ref{thm:facetoflag}, with
Lemma~\ref{lemma:cubicalF<G} replacing Lemma~\ref{lemma:F<G}, except that equality of the total sums is no longer guaranteed. 

Let $\{F_1, F_2, \ldots, F_s\}$ be the rank-$\sigma_1$ faces of $\mathcal C$.  Then we have
\[ \dflag(\mathcal{C}) =  \binom{k-\sigma_1}{\sigma_2, \ldots, \sigma_m} \left (d^{\, (\sigma_1,k-\sigma_1)}(F_1), \ldots, d^{\, (\sigma_1, k-\sigma_1)}(F_s)\right).\]
By the same argument as in Theorem~\ref{thm:facetoflag},  if $\sigma_1 < \pi_1$, then $$ \sum_{i=1}^{r}
\dpflag(G_i) \leq \sum_{i=1}^{r} \dflag(F_i) \, \, \, \textrm{for all}
\,\,\, r \leq s,$$ 
\noindent where $\{G_1, \ldots, G_t\}$ are the rank-$\pi_1$ faces in $\mathcal{C}$.   

Now,
\begin{eqnarray*}
	\sum \dflag(\mathcal{C}) &=  &\binom{k-\sigma_1}{\sigma_2, \ldots, \sigma_m}   f_{\sigma_1,k}(\mathcal{C})\\
	&= & \binom{k-\sigma_1}{\sigma_2, \ldots, \sigma_m} f_k(\mathcal{C}) N_{\sigma_1},
\end{eqnarray*}
\noindent where $N_{\sigma_1}$ is the number of rank-$\sigma_1$ faces contained in a given rank-$k$ face.

Comparing this to the sum $\sum \dpflag(\mathcal{C})$, we find
\begin{eqnarray}
\frac{\sum \dflag(\mathcal{C})}{\sum \dpflag(\mathcal{C})} &=& \frac{N_{\sigma_1}}{N_{\pi_1}} \cdot \frac{{k- \sigma_1 \choose \pi_1}}{{k - \pi_1 \choose \sigma_1}}.
\label{eqn:weakmajor}
\end{eqnarray}
This ratio must be greater than or equal to $1$ for weak majorization to hold. 

To establish this inequality, we restrict our attention to a single maximal face and consider the number of pairs of rank-$\sigma_1$ faces contained in rank-$\pi_1$ faces.
Since the maximal face is simple, each rank-$\sigma_1$ face is contained in $\binom{k-\sigma_1}{\pi_1-\sigma_1}$ rank-$\pi_1$ faces.  Each rank-$\pi_1$ face contains at least $\binom{\pi_1}{\sigma_1}$ rank-$\sigma_1$ faces, so 
$$ \binom{k-\sigma_1}{\pi_1-\sigma_1} N_{\sigma_1} \geq \binom{\pi_1}{\sigma_1} N_{\pi_1}.$$
This implies that the expression in equation (\ref{eqn:weakmajor}) is at least $1$.
Therefore, if $\sigma_1 < \pi_1$, then 
$$\dflag(\mathcal{C}) \succeq \dpflag(\mathcal{C}).$$
Furthermore if $\sigma_1 = \pi_1$, then 
$$\dflag(\mathcal{C}) = \dpflag(\mathcal{C}).$$
\end{proof}

In fact, the result of Theorem~\ref{thm:simplefacetoflag} would hold for suitably defined ``polyhedral posets'' as well. For example, just as a simplicial poset is one in which every principal order ideal is a boolean algebra, a poset is \emph{cubical} if every principal order ideal is isomorphic to the face lattice of a cube, and such posets also satisfy the results of the theorem.

\subsection{A polyhedral counterexample}\

Theorem \ref{thm:facetoflag} breaks down, even in the case of weak majorization,  as we diverge from the simplicial and simple cases.  For example, in a 3-polytope, the sum of the entries of $d^{(1,3)}$ is equal to $f_1$, the number of vertices of the polytope, while the sum of the entries of $d^{(3,1)}$ is $f_3$, the number of facets.  Thus, any 3-polytope with more facets than vertices results in $d^{(3,1)}$ weakly majorizing $d^{(1,3)}$.  The following example describes one such polyhedral complex; in fact, a Platonic solid.

\begin{example}
\label{ex:failing2}
Let $\Delta$ be the octahedron.  We have
\begin{eqnarray*}
d^{\, (1,3)}(\Delta) &=& (1,1,1,1,1,1), \text{ and}\\
d^{\, (3,1)}(\Delta) &=& (1,1,1,1,1,1,1,1).
\end{eqnarray*}
Although $(3,1) \unrhd (1,3)$, the (weak) majorization of these degree sequences is in the direction opposite to that in the results for the simplicial and simple cases: $d^{\, (3,1)}(\Delta) \succeq d^{\, (1,3)}(\Delta)$.
\end{example}

\section*{Acknowledgments}
The authors thank Lou Billera for helpful discussions and several
important ideas.  We are also
grateful for the advice and perspective of two very thoughtful referees,
particularly for the suggestion to investigate further the case of simple
polytopes.

\end{document}